 \newtheorem{thm}{Theorem}[section]
 \newtheorem{lem}[thm]{Lemma}
 \newtheorem{prop}[thm]{Proposition}
 \theoremstyle{definition}
 \theoremstyle{remark}
 \newtheorem{rem}[thm]{Remark}
\numberwithin{equation}{section}
\numberwithin{figure}{section}
\newcommand{\beur}{{\mathbf B}}
\newcommand{\proj}{{\mathbf P}}
\renewcommand{\d}{{\boldsymbol\partial}}
\newcommand{\dbar}{{\bar{\boldsymbol\partial}}}
\newcommand{\bDelta}{{\boldsymbol\Delta}}
\newcommand{\e}{\mathrm e}
\newcommand{\C}{{\mathbb C}}
\newcommand{\D}{{\mathbb D}}
\newcommand{\R}{{\mathbb R}}
\newcommand{\id}{\operatorname{\text{\bf I}}}
\newcommand{\sgn}{\operatorname{\text{sgn}}}
\newcommand{\calF}{{\mathcal F}}
\newcommand{\re}{\operatorname{Re}}
\newcommand{\im}{\operatorname{Im}}
\newcommand{\Qop}{{\mathbf Q}}
\newcommand{\Cop}{{\mathbf C}}
\newcommand{\Dop}{{\mathbf D}}
\newcommand{\Eop}{{\mathbf E}}
\newcommand{\Mop}{{\mathbf M}}
\newcommand{\Top}{{\mathbf T}}
\newcommand{\Hplane}{{\mathbb H}}
\newcommand{\diff}{{\mathrm d}}
\newcommand{\imag}{{\mathrm i}}
\begin{document}
%
\title[The Beurling operator for the hyperbolic plane]
{The Beurling operator for the hyperbolic plane}

\author[Hedenmalm]
{H\aa{}kan Hedenmalm}

\address{Hedenmalm: Department of Mathematics\\
The Royal Institute of Technology\\
S -- 100 44 Stockholm\\
SWEDEN}

\email{haakanh@math.kth.se}

\thanks{Research partially supported by the G\"oran Gustafsson Foundation and
by the Swedish Science Council (Vetenskapsr\aa{}det).}




\subjclass{}

\keywords{}



\maketitle

\addtolength{\textheight}{2.2cm}







\noindent{\bf Abstract. } We find a Beurling operator for the hyperbolic 
plane, and obtain an $L^2$ norm identity for it, as well as two-sided 
$L^p$ estimates.

\section{Introduction and statement of main results}

\noindent\bf Outline of the paper. \rm We first mention the classical Cauchy
and Beurling operators $\Cop$ and $\beur$ in the setting of the plane. We
then introduce their hyperbolic plane analogues 
$\Cop^\uparrow,\Cop^\downarrow$ and $\beur^\uparrow,\beur^\downarrow$.
For instance, the hyperbolic Cauchy operator $\Cop^\downarrow$ finds the 
$L^2$-minimal solution to the $\dbar$-problem.
The mapping properties of $\Cop^\uparrow,\Cop^\downarrow$ rely on the 
well-known Hardy inequality for the upper half plane. We find a sharp two-sided
estimate for the the norm of $\beur^\downarrow[f]$ in weighted $L^p$-spaces,
which is analogous to the well-known two-sided estimate for $\beur[f]$. 
In the Hilbert space case $p=2$, the estimate becomes a norm isometry.
The way the proof is set up, we need some results of Liouville type for
the hyperbolic plane. 
In the final section, we try to explain the assertion of the main theorem in 
geometric terms. 
\medskip

\noindent\bf The Beurling transform. \rm
The Beurling transform (or operator) $\beur:\,L^2(\C)\to L^2(\C)$ is 
formally the operator $\beur={\d}\dbar^{-1}$. Here, we use the notation
$$\d_z=\frac{1}{2}\bigg(\frac{\partial}{\partial x}-\imag
\frac{\partial}{\partial y}\bigg),\quad \dbar_z=
\frac{1}{2}\bigg(\frac{\partial}{\partial x}+\imag
\frac{\partial}{\partial y}\bigg),$$
and put $\bDelta_z=\d_z\dbar_z$; as above, we frequently suppress 
the subscript $z$. This way of defining $\beur$ leaves some ambiguity, as
there are many possible ways to define $\dbar^{-1}$. The choice is to
use the Cauchy transform $\Cop$ for $\dbar^{-1}$,
$$\Cop[f](z)=\int_{\C}\frac{f(w)}{z-w}\,\diff A(w),\qquad z\in\C,$$
where 
$$\diff A(z)=\frac{\diff x\diff y}{\pi},\qquad z=x+\imag y,$$
is normalized area measure. Unfortunately, the integral defining $\Cop[f]$ is
not well-defined for all $f\in L^2(\C)$, but if $f$ is compactly supported,
there is no problem. Differentiating the Cauchy transform, we get
$$\beur[f](z)=-\text{pv}\int_{\C}\frac{f(w)}{(z-w)^2}\,\diff A(w),
\qquad z\in\C,$$
where ``pv'' stands for {\em principal value}. It is easy to show, using
Fourier analysis or Green's formula, that $\beur$ acts isometrically on 
$L^2(\C)$:
\begin{equation}
\|\beur[f]\|_{L^2(\C)}^2=\|f\|_{L^2(\C)}^2=\int_\C|f|^2\diff A,
\label{eq-0}
\end{equation}
where the rightmost identity defines the norm in $L^2(\C)$. It is well-known
that $\beur$ acts boundedly on $L^p(\C)$ for $1<p<+\infty$; let $B(p)$ denote
its norm, that is, the best constant such that
$$\|\beur[f]\|_{L^p(\C)}\le B(p)\|f\|_{L^p(\C)},\qquad f\in L^p(\C),$$
holds. It is easy to show that there is an estimate from below as well:
\begin{equation}
\frac{1}{B(p)}\|f\|_{L^p(\C)}\le
\|\beur[f]\|_{L^p(\C)}\le B(p)\|f\|_{L^p(\C)},\qquad f\in L^p(\C).
\label{eq-beur1}
\end{equation}
A well-known conjecture due to Tadeusz Iwaniec (see \cite{Iwan}, \cite{BMS},
\cite{DV}, \cite{BJ}) claims that
$$B(p)=\max\bigg\{p-1,\frac{1}{p-1}\bigg\},\qquad 1<p<+\infty.$$
An easy duality argument shows that with $p'=p/(p-1)$ (dual exponent),
$$B(p)=B(p'),\qquad 1<p<+\infty.$$
There is a formulation of \eqref{eq-beur1} which does not use singular
integrals:
\begin{equation}
\frac{1}{B(p)}\|\dbar g\|_{L^p(\C)}\le
\|\d g\|_{L^p(\C)}\le B(p)\|\dbar g\|_{L^p(\C)},\qquad g\in C^\infty_0(\C),
\label{eq-beur2}
\end{equation}
where $C^\infty_0(\C)$ is the space of compactly supported test functions.
\medskip

\noindent\bf The hyperbolic plane. \rm 
Let $\Hplane$ denote the hyperbolic plane; we shall use the model
$$\Hplane=\langle\C_+,\diff s_{\Hplane}\rangle,$$
where 
$$\C_+=\big\{z\in \C:\,\im z>0\big\}$$
is the upper half plane, and
$$\diff s_{\Hplane}(z)=\frac{|\diff z|}{\im z}$$
is the Poincar\'e metric. The hyperbolic area element is given by
$$\diff A_{\Hplane}(z)=\frac{\diff A(z)}{(\im z)^2}.$$

\noindent\bf Function spaces. \rm
For $0<p<+\infty$ and real $q$, we introduce the space $L^p_q(\C_+)$
of (equivalence classes of) area-Lebesgue measurable functions subject to the
integrability condition
$$\|f\|^p_{L^p_q(\C_+)}=\int_{\C_+}|f(z)|^p(\im z)^q\diff A(z)=
\int_{\C_+}|f(z)|^p(\im z)^{q+2}\diff A_{\Hplane}(z)<+\infty.$$
It is a Banach space for $1\le p<+\infty$. We realize that $L^2_{\!-\!2}(\C_+)$
has the interpretation of $L^2(\Hplane)$, the $L^2$ space over the hyperbolic
plane. 
\medskip

\noindent\bf Some notation. \rm
We shall at times need conjugate symbol operators, as defined by
$$\bar\Top[f]=\text{conj}\,(\Top[\bar f]),$$
and we apply this notational convention to all the operators considered here.
Moreover, if $\calF$ is a collection of complex-valued functions, we write
$\text{conj}\,(\calF)$ for the collection of complex conjugates of the 
functions in $\calF$.
\medskip

\noindent \bf Hardy's inequality for the upper half plane. \rm
By Hardy's inequality for the upper half space,
\begin{equation}
\int_{\C_+}|f(z)|^p\frac{\diff A(z)}{(\im z)^p}\le 2^{p/2}(1-1/p)^{-p}
\int_{\C_+}
\big(|\d f(z)|^2+|\dbar f(z)|^2\big)^{p/2}\diff A(z),
\label{eq-hardy}
\end{equation}
for $f\in C^\infty_0(\C_+)$. 
The constant is sharp (see, e. g., \cite{Dav}, \cite{Maz}). If we use
that for $a,b\in\C$, 
$$(|a|^2+|b|^2)^{p/2}\le A(p)(|a|^p+|b|^p),\qquad A(p)=\max\{1,2^{-1+p/2}\},$$
we get
\begin{equation*}
\int_{\C_+}|f(z)|^p\frac{\diff A(z)}{(\im z)^p}\le 
2^{p/2}(1-1/p)^{-p}A(p)
\int_{\C_+}
\big(|\d f(z)|^p+|\dbar f(z)|^p\big)\diff A(z),
\end{equation*}
which in terms of norms reads
\begin{equation}
\|f\|^p_{L^p_{\!-\!p}(\C_+)}\le 
2^{p/2}(1-1/p)^{-p}A(p)\,
\big(\|\d f\|^p_{L^p(\C_+)}+\|\dbar f\|^p_{L^p(\C_+)}\big),\qquad f\in 
C^\infty_0(\C_+).
\label{eq-hardy-1'}
\end{equation}
Next, since by \eqref{eq-beur1},
$$\|\d f\|_{L^p(\C_+)}\le B(p)\,
\|\dbar f\|_{L^p(\C_+)},\qquad  f\in C^\infty_0(\C_+),$$
we obtain from \eqref{eq-hardy-1'} that
\begin{equation}
\|f\|^p_{L^p_{\!-\!p}(\C_+)}\le 2^{p/2}(1-1/p)^{-p}A(p)\,(1+B(p)^p)\,
\|\dbar f\|^p_{L^p(\C_+)},\qquad f\in C^\infty_0(\C_+).
\label{eq-hardy-2'}
\end{equation}
It is not obvious whether the constant appearing
on the right hand side of \eqref{eq-hardy-2'} is optimal for general $p$.
However, in case $p=2$, \eqref{eq-hardy-2'} reads
\begin{equation}
\|f\|_{L^2_{\!-\!2}(\C_+)}\le 4\,\|\dbar f\|_{L^2(\C_+)}, 
\qquad f\in C^\infty_0(\C_+),
\label{eq-hardy-3'}
\end{equation}
and the constant is sharp.
\medskip

\noindent\bf Cauchy operators associated with the upper half plane. \rm
For functions $f$ defined on $\C_+$, we introduce the Cauchy-type operators
$$\Cop^\downarrow[f](z)
=\int_{\C_+}\bigg(\frac{1}{z-w}-\frac{1}{z-\bar w}\bigg)f(w)\,
\diff A(w)=2\imag\int_{\C_+}\frac{f(w)\,\im w}{(z-w)(z-\bar w)}\,\diff A(w),
\qquad z\in\C_+,$$
and
$$\Cop^\uparrow[f](z)=
\int_{\C_+}\bigg(\frac{1}{z-w}-\frac{1}{\bar z-w}\bigg)f(w)\,
\diff A(w)=-2\imag\int_{\C_+}\frac{\im z\,f(w)}{(z-w)(\bar z-w)}\,\diff A(w),
\qquad z\in\C_+,$$
for all locally integrable functions $f$ for which the integrals
make sense (almost everywhere on $\C_+$). The operator $\Cop^\downarrow$
appears in the context of the unit disk in Subsection 4.8.3 of the book 
\cite{AIM} by Astala, Iwaniec, Martin.  
The identity
$$\bigg(\frac{1}{z-w}-\frac{1}{z-\bar w}\bigg)+\bigg(\frac{1}{\bar z-\bar w}
-\frac{1}{\bar z-w}\bigg)=\bigg(\frac{1}{z-w}-\frac{1}{\bar z-w}\bigg)
+\bigg(\frac{1}{\bar z-\bar w}-\frac{1}{z-\bar w}\bigg)$$
entails the operator identity
\begin{equation}
\Cop^\downarrow+\bar\Cop^\downarrow\equiv\Cop^\uparrow+\bar\Cop^\uparrow.
\label{eq-Copid}
\end{equation}
Moreover, with respect to the inner product of $L^2(\C_+)$, we have that
\begin{equation}
(\Cop^\downarrow)^*=-\bar\Cop^\uparrow,\quad
(\bar\Cop^\downarrow)^*=-\Cop^\uparrow,\quad
(\Cop^\uparrow)^*=-\bar\Cop^\downarrow,\quad
(\bar\Cop^\uparrow)^*=-\Cop^\downarrow.
\label{eq-Copidadj}
\end{equation}
To understand the action of 
$\Cop^{\uparrow}$, we note that 
\begin{multline}
F(z)-\Cop^\uparrow[\dbar F](z)=
-\int_{\C_+}\bigg(\frac{1}{z-w}-\frac{1}{\bar z-w}\bigg)\bar\partial 
F(w)\,\diff A(w)\\
=\int_{\C_+}\bar\partial_w\bigg\{
\bigg(\frac{1}{w-z}-\frac{1}{w-\bar z}\bigg)F(w)\bigg\}\diff A(w)
=\frac{1}{2\pi\imag}\int_{\R}\bigg(\frac{1}{w-z}-\frac{1}{w-\bar z}\bigg)
F(w)\,\diff w,\qquad z\in\C_+,
\label{eq-green}
\end{multline}
provided $F$ and $\dbar F$ are smooth and taper off relatively quickly to $0$
at infinity (the middle integral is to be interpreted in the sense of 
distributions theory). 
As a first application of \eqref{eq-green}, we find that
\begin{equation}
\Cop^\uparrow[\dbar f]=f,\qquad f\in C_0^\infty(\C_+).
\label{eq-1.1}
\end{equation}
In $L^2(\C_+)$, the closure of $\dbar C^\infty_0(\C_+)$ equals
 $L^2(\C_+)\ominus\text{conj}(A^2(\C_+))$ (this fact is known as Havin's 
lemma).
A second application of \eqref{eq-green} shows that
\begin{equation}
\Cop^\uparrow[g]=0,\qquad g\in \text{conj}(A^2(\C_+)),
\label{eq-zero}
\end{equation}
which means that we have determined the action of $\Cop^\uparrow$ on all of 
$L^2(\C_+)$. 
It now follows from \eqref{eq-1.1} and \eqref{eq-zero} combined 
with \eqref{eq-hardy-3'} that
\begin{equation}
\|\Cop^\uparrow[g]\|_{L^2_{\!-\!2}(\C_+)}\le 4\|g\|_{L^2(\C_+)},
\qquad g\in L^2(\C_+).
\label{eq-hardy-2}
\end{equation}
Expressed differently, the operator
\begin{equation}
\Cop^\uparrow:\,L^2(\C_+)\to L^2_{\!-\!2}(\C_+)
\label{eq-cauchy}
\end{equation}
is bounded and has norm $4$. A similar argument based on \eqref{eq-hardy-2'}
shows that 
$$\Cop^\uparrow:\,L^p(\C_+)\to L^p_{\!-\!p}(\C_+), \qquad 1<p<+\infty,$$
with a norm bound which depends on $p$. Let $C^\uparrow(p)$ be the norm of 
this operator. By \eqref{eq-Copidadj}, and the fact that with respect to the
inner product of $L^2(\C_+)$, the spaces $L^p_{\!-\!p}(\C_+)$ and 
$L^{p'}_{p'}(\C_+)$ are dual to one another (here, $p'=p/(p-1)$), we have
that
$$\Cop^\downarrow:\,L^p_p(\C_+)\to L^p(\C_+), \qquad 1<p<+\infty,$$
is bounded as well; we denote its norm by $C^\downarrow(p)$. 
The duality argument actually gives that
$$C^\downarrow(p)=C^\uparrow(p'),\qquad p'=p/(p-1).$$
As noted previously, for $p=2$, we have $C^\downarrow(2)=C^\uparrow(2)=4$.

We specialize for a moment to $p=2$ and look for an interpretation of 
the operator $\Cop^\downarrow$. 
By duality, the information on the null space of $\Cop^\uparrow$ supplied by
\eqref{eq-zero} leads to information on the range of $\Cop^\downarrow$:
$$\Cop^\downarrow:L^2_2(\C_+)\to L^2(\C_+)\ominus A^2(\C_+).$$
The operator $\Cop^\downarrow$ therefore furnishes the least norm 
solution to the $\dbar$-problem: $u=\Cop^\downarrow[f]$ has smallest 
norm in $L^2(\C_+)$ among all solutions to
$$\dbar u=f(z),\qquad z\in\C_+.$$

\medskip

\noindent\bf Double singularity Cauchy-type integral operators. \rm
We introduce the operators $\Dop^\uparrow$, $\Dop^\downarrow$, as given by
$$\Dop^\uparrow[g](z)=\int_{\C_+}\frac{g(w)}{(z-w)(\bar z-w)}\,\diff A(w),
\qquad z\in\C_+,$$
and
$$\Dop^\downarrow[g](z)=\int_{\C_+}\frac{g(w)}{(z-w)(z-\bar w)}\,\diff A(w),
\qquad z\in\C_+.$$
We readily check that
\begin{equation}
\Cop^\uparrow=-2\imag\,\Mop\Dop^\uparrow,\qquad
\Cop^\downarrow=2\imag\,\Dop^\downarrow\Mop,
\label{eq-4}
\end{equation}
where $\Mop[f](z)=(\im z)f(z)$. The analogous operators in the
setting of the unit disk $\D$ in place of $\C_+$ appeared recently in 
\cite{BarHed}.
The boundedness of the operators $\Cop^\uparrow$ and $\Cop^\downarrow$
in the corresponding contexts entails that 
$$\Dop^\uparrow:\,L^p(\C_+)\to L^p(\C_+),\qquad 
\Dop^\downarrow:\,L^p(\C_+)\to L^p(\C_+),$$ 
act boundedly for $1<p<+\infty$. Moreover, the norms of these operators 
may be expressed in terms of $C^\uparrow(p)$, $C^\downarrow(p)$.
In \cite{BarHed}, the operators $\Dop^\uparrow$ and $\Dop^\downarrow$ appeared
in the analysis of conformal maps. We may bring the analysis one step further
and consider, for a conformal mapping $\varphi:\C_+\to\Omega$, where 
$\Omega\subset\C$, the operators
$$\Dop^\uparrow_\varphi[g](z)
=\int_{\C_+}\frac{\varphi'(z)g(w)}{(\varphi(z)-\varphi(w))(\bar z-w)}
\,\diff A(w),\qquad z\in\C_+,$$
and 
$$\Dop^\downarrow_\varphi[g](z)=\int_{\C_+}
\frac{\varphi'(w)g(w)}{(\varphi(z)-\varphi(w))(z-\bar w)}\,\diff A(w),
\qquad z\in\C_+.$$
The instance $\varphi(z)=z$ gives us the operators $\Dop^\uparrow,
\Dop^\downarrow$ already mentioned. The more general operators 
$\Dop^\uparrow_\varphi,\Dop^\downarrow_\varphi$ deserve attention as well
(cf \cite{BarHed}).
\medskip

\noindent\bf The sum of two Cauchy-type operators. \rm The identity
\eqref{eq-Copid} suggests that we should study the operator
$$\Cop_{\text{sum}}=\Cop^\downarrow+\bar\Cop^\downarrow=
\Cop^\uparrow+\bar\Cop^\uparrow.$$
The mapping properties of $\Cop^\downarrow$ and $\Cop^\uparrow$ show that
$\Cop_{\text{sum}}$ maps boundedly ($1<p<+\infty$)
$$\Cop_{\text{sum}}:\,L^p_p(\C_+)\to L^p(\C_+),\quad
\Cop_{\text{sum}}:\,L^p(\C_+)\to L^p_{-p}(\C_+).$$
Interpolation theory allows us to combine these statements to get 
that $\Cop_{\text{sum}}$ maps boundedly ($1<p<+\infty$)
$$\Cop_{\text{sum}}:\,L^p_q(\C_+)\to L^p_{q-p}(\C_+),\qquad 0\le q\le p.$$
The calculation
$$\bigg(\frac{1}{z-w}-\frac{1}{z-\bar w}\bigg)+\bigg(\frac{1}{\bar z-\bar w}
-\frac{1}{\bar z-w}\bigg)=8\im z\,\im w\,\frac{\re(z-w)}{|(z-w)(z-\bar w)|^2}$$
shows that
$$\Cop_{\text{sum}}=8\,\Mop\Eop\Mop,$$
where $\Eop$ is the integral operator
$$\Eop[f](z)=\int_{\C_+}\frac{\re(z-w)}{|(z-w)(z-\bar w)|^2}\,f(w)\,\diff A(w),
\qquad z\in\C_+.$$
We read off from the mapping property of $\Cop_{\text{sum}}$ that $\Eop$ acts 
boundedly 
($1<p<+\infty$)
$$\Eop:\,L^p_{q-p}(\C_+)\to L^p_{q}(\C_+),\qquad
0\le q\le p.$$
For $p=2$ we even have that $\Eop:\,L^2_{q-2}(\C_+)\to L^2_{q}(\C_+)$ is 
norm contraction for $0\le q\le2$. 

\section{The Beurling transform for the hyperbolic plane}

\noindent\bf Beurling-type operators. \rm
We introduce the Beurling-type operators
$$\beur^\downarrow[f](z)=\d\Cop^\downarrow[f](z)=\text{pv}
\int_{\C_+}\bigg[\frac{1}{(z-\bar w)^2}-\frac{1}{(z-w)^2}\bigg]f(w)\,
\diff A(w),\qquad z\in\C_+,$$
and
$$\beur^\uparrow[f](z)=\text{pv}
\int_{\C_+}\bigg[\frac{1}{(\bar z-w)^2}-\frac{1}{(z-w)^2}\bigg]f(w)\,
\diff A(w),\qquad z\in\C_+,$$
for functions $f$ such that the above expressions make sense.
With respect to the inner product of $L^2(\C_+)$, we have the adjoint 
calculation formulas
$$(\beur^\downarrow)^*=\bar\beur^\uparrow,\quad 
(\bar\beur^\downarrow)^*=\beur^\uparrow,\quad (\beur^\uparrow)^*
=\bar\beur^\downarrow,\quad(\bar\beur^\downarrow)^*=\beur^\uparrow.$$
In analogy with \eqref{eq-Copid}, we have the operator identity
$$\beur^\downarrow+\bar\beur^\downarrow=\beur^\uparrow+\bar\beur^\downarrow.$$
If we extend $f$ to $\C$ by declaring it to vanish off $\C_+$, we have
$$\beur^\downarrow[f](z)=\beur[f](z)-\bar\beur[f](\bar z),\qquad z\in\C_+,$$
and
$$\beur^\uparrow[f](z)=\beur[f](z)-\beur[f](\bar z),\qquad z\in\C_+.$$
In view of \eqref{eq-beur1}, we see that $\beur^\downarrow$ and 
$\beur^\uparrow$ act boundedly on $L^p(\C_+)$ for $1<p<+\infty$, with
norm bound
\begin{equation}
\big\|\beur^\downarrow[f]\big\|_{L^p(\C_+)}\le2B(p)\,\|f\|_{L^p(\C_+)},
\qquad f\in L^p(\C_+).
\label{eq-BLp}
\end{equation}
The analogous bound holds for $\beur^\uparrow$ as well.

We shall obtain a more interesting result. For 
$1<p<+\infty$, $\beur^\downarrow$ acts boundedly on $L^p_{p}(\C_+)$, while 
$\beur^\uparrow$ acts boundedly on $L^p_{\!-\!p}(\C_+)$. It should be pointed
out here that the functions in $L^p_{p}(\C_+)$, extended to vanish on 
$\C\setminus\C_+$, need not be locally area-integrable near the real line,
and therefore it is not clear how, e. g., the operator $\beur$ could be
defined on $L^p_{p}(\C_+)$. {\em But $\beur^\downarrow$ is well-defined due
to the cancellation in the symbol}.

The spaces $L^p_{p}(\C_+)$ and $L^{p'}_{\!-\!p'}(\C_+)$ are dual to one 
another with respect to the inner product of $L^2(\C_+)$ (here $p'=p/(p-1)$
is the dual exponent). By interpolation theory, then, it follows from the
above that
$$\beur^\downarrow:\,L^p_q(\C_+)\to L^p_q(\C_+),\quad
\beur^\uparrow:\,L^{p}_{-q}(\C_+)\to L^{p}_{-q}(\C_+),$$
act boundedly for $0\le q\le p$. This range surely is not best possible, but an
understanding of when the Beurling operator $\beur$ is bounded in the weighted 
context (cf. \cite{PetVol}) combined with the approach presented here should
lead to the optimal range.  

As will be explained later on, the operators $\beur^\uparrow$ and 
$\beur^\downarrow$ are modifications of the Beurling operator $\beur$ to 
the setting of the hyperbolic plane. So it is natural to ask to what extent
\eqref{eq-beur1} has a hyperbolic analogue. 
A perhaps naive first try which comes to mind is what are the best constants
$B^\downarrow_1(p),\,B^\downarrow_2(p)$ so that
\begin{equation}
B^\downarrow_1(p)\,\|f\|_{L^p_p(\C_+)}\le
\|\beur^\downarrow[f]\|_{L^p_p(\C_+)}\le 
B^\downarrow_2(p)\,\|f\|_{L^p_p(\C_+)},\qquad f\in L^p_{p}(\C_+),
\label{eq-tentB}
\end{equation}
It will turn out that 
$$\beur^\downarrow[f]=0,\qquad f\in\text{conj}\,(A^p_p(\C_+)),$$
so that $B^\downarrow_1(p)=0$ necessarily. While $B^\downarrow_2(p)$ exists
boundedly for all $1<p<+\infty$, the exact value appears to be unknown; for
$p=2$ (the Hilbert space case!) the methods of this paper give 
$B^\downarrow_2(2)\le5$. 
If we want a two-sided estimate, we need to compare the norm of 
$\beur^\downarrow[f]$ with the norm of an expression which vanishes on 
$\text{conj}\,(A^p_p(\C_+))$. The expression that will work is
$$f+4\imag\Eop\Mop[f].$$
\medskip

\section{Commutator identities}

\noindent\bf The commutator of differentiation and multiplication. 
\rm The commutator of between
the differentiation operators $\d,\dbar$ and the multiplication operator 
$\Mop^n$ ($n$ is an integer) is given by
\begin{equation}
[\d,\Mop^n]=\d\Mop^n-\Mop^n\d=-\frac{\imag n}{2}\,\Mop^{n-1},\quad
\dbar\Mop^n-\Mop^n\dbar=\frac{\imag n}{2}\,\Mop^{n-1}.
\label{eq-2.2}
\end{equation}
These relations constitute a key step in the proof of the main theorem.
\medskip

\noindent\bf The commutator of hyperbolic Beurling operators. \rm
The various commutators which can be formed using the operators 
$\beur^\uparrow,\bar\beur^\uparrow,\beur^\downarrow,\bar\beur^\downarrow$
can be reduced one of the following three:
$$[\beur^\downarrow,\beur^\uparrow], \quad
[\beur^\downarrow,\bar\beur^\uparrow],\quad 
[\beur^\downarrow,\bar\beur^\downarrow].$$
 We first consider the commutator 
$$[\beur^\downarrow,\beur^\uparrow]=\beur^\downarrow\beur^\uparrow-
\beur^\uparrow\beur^\downarrow.$$
To simplify our work, we introduce the compressed Beurling operator $\beur_+$.
It is given by 
$$\beur_+[f](z)=-\text{pv}\int_{\C_+}\frac{f(w)}{(z-w)^2}\diff A(w),\qquad
z\in\C_+.$$
Moreover, we let $\proj_0$ denote the orthogonal projection $L^2(\C_+)\to
A^2(\C_+)$, which is given explicitly by
$$\proj_0[f](z)=-\int_{\C_+}\frac{f(w)}{(z-\bar w)^2}\diff A(w),
\qquad z\in\C_+.$$ 
In terms of these operators, we have
$$\beur^\uparrow=\beur_{+}-\bar\proj_0,
\quad\beur^\downarrow=\beur_{+}-\proj_0,$$
and so
$$\beur^\downarrow\beur^\uparrow=(\beur_{+}-\proj_0)(\beur_{+}-\bar\proj_0)
=\beur_{+}^2-\beur_{+}\bar\proj_0-\proj_0\beur_{+}+\proj_0\bar\proj_0,$$
while
$$\beur^\uparrow\beur^\downarrow=(\beur_{+}-\bar\proj_0)(\beur_{+}-\proj_0)
=\beur_{+}^2-\beur_{+}\proj_0-\bar\proj_0\beur_{+}+\bar\proj_0\proj_0.$$
Next, $A^2(\C_+)$ and $\text{conj}(A^2(\C_+))$ are orthogonal to one another
in $L^2(\C_+)$, and therefore $\proj_0\bar\proj_0=\bar\proj_0\proj_0=
{\mathbf0}$, where $\mathbf0$ stands for the zero operator. For perhaps
less obvious reasons (cf. \cite{BarHed}), we also have
$\proj_0\beur_{+}=\beur_{+}\bar\proj_0={\mathbf0}$.
So, the commutator simplifies significantly:
$$[\beur^\downarrow,\beur^\uparrow]=\beur^\downarrow\beur^\uparrow-
\beur^\uparrow\beur^\downarrow=\beur_+\proj_0+\bar\proj_0\beur_+.$$
Put ${\Qop}_1=-\d\proj_0$, which is given explicitly by
$$\Qop_1[f](z)=\int_{\C_+}\frac{2f(w)}{(z-\bar w)^3}\diff A(w),
\qquad z\in\C_+.$$ 
A calculation verifies that
$$\bar\proj_0\beur_+=\bar\Qop_1\Mop,\quad \beur_+\proj_0=-\Mop\Qop_1,$$
so that
$$[\beur^\downarrow,\beur^\uparrow]=\bar\Qop_1\Mop-\Mop\Qop_1.$$

The second commutator to be considered is
$$[\beur^\downarrow,\bar\beur^\uparrow]=\beur^\downarrow\bar\beur^\uparrow-
\bar\beur^\uparrow\beur^\downarrow.$$
Following along the lines of the preceding commutator calculation, we find that
$$[\beur^\downarrow,\bar\beur^\uparrow]=[\beur_+,\bar\beur_+]
-\proj_0\bar\beur_+-\beur_+\proj_0=[\beur_+,\bar\beur_+]+\Mop\Qop_1-
\Qop_1\Mop.$$
We readily find that
$$\bar\beur_+\beur_+=\id-\bar\proj_0,\quad 
\beur_+\bar\beur_+=\id-\proj_0,$$
where $\id$ is the identity operator, so that
$$[\beur_+,\bar\beur_+]=\bar\proj_0-\proj_0,$$
and consequently
$$[\beur^\downarrow,\bar\beur^\uparrow]=\bar\proj_0-\proj_0+\Mop\Qop_1
-\Qop_1\Mop.$$
This commutator relation is important because it gives that
\begin{equation*}
\|\bar\beur^\uparrow[f]\|_{L^2(\C_+)}^2-
\|\beur^\downarrow[f]\|_{L^2(\C_+)}^2
=\|\bar\proj_0[f]\|^2_{L^2(\C_+)}
-\|\proj_0[f]\|^2_{L^2(\C_+)}
+2\re\langle\Qop_1[f],\Mop[f]\rangle_{L^2(\C_+)}.
\end{equation*}

The third commutator is 
$$[\beur^\downarrow,\bar\beur^\downarrow]
=\beur^\downarrow\bar\beur^\downarrow-
\bar\beur^\downarrow\beur^\downarrow.$$
The method employed above yields that
$$[\beur^\downarrow,\bar\beur^\downarrow]=\bar\proj_0-\proj_0
+\Mop\Qop_1-\Mop\bar\Qop_1,$$
and, as a consequence, we may also derive that
$$[\beur^\uparrow,\bar\beur^\uparrow]=\bar\proj_0-\proj_0
+\bar\Qop_1\Mop-\Qop_1\Mop.$$

\section{A hyperbolic Liouville-type theorem}

\noindent\bf A Liouville-type theorem for the hyperbolic plane. \rm 
By Liouville's theorem, the only bounded harmonic functions in the 
complex plane are the constants. If we ask the functions to be in $L_p(\C)$
as well, the only harmonic function is the constant $0$. 
A hyperbolic plane analogue of this statement is offered by the following
(see \cite{HedPar} for details). 

\begin{thm} {\rm (Hedenmalm-Parissis)}
Suppose a function $f\in L^p_q(\C_+)$ is harmonic in $\C_+$, where $q$ is 
real while $1\le p<+\infty$. If $q\le-1$, then $f=0$. 
On the other hand, if $-1<q$, there are nontrivial harmonic functions $f$ in 
$L^p_q(\C_+)$. 
\label{thm-L}
\end{thm}

\begin{rem} (a)
For $0<p<1$ and $q\le-2$ the theorem follows from N. Suzuki 
\cite{Suz}. 
\end{rem}

\noindent\bf A biharmonic Liouville-type theorem for the hyperbolic plane. \rm 
A function $f$ with $\bDelta^2f=0$ is said to be {\em biharmonic}.  
An example of such a function is $f=f_1+\Mop f_2$, where $f_1,f_2$ are both
harmonic.

\begin{thm} {\rm (Hedenmalm-Parissis)}
Suppose a function $f\in L^p_q(\C_+)$ is biharmonic in $\C_+$, where 
$-\infty<q\le-1$ while $1\le p<+\infty$. 
Then $\Mop^{-1}f$ is harmonic in $\C_+$. 
\label{thm-L-bih}
\end{thm}

\section{Main results}

\noindent\bf The norm estimate of the hyperbolic plane Beurling transform. 
\rm We now state our main theorem.

\begin{thm} $(1<p<+\infty)$
The operators 
$$\beur^\downarrow:\,L^p_{\!-\!p}(\C_+)\to L^p_{\!-\!p}(\C_+)$$
and
$$\beur^\uparrow:\,L^p_{p}(\C_+)\to L^p_{p}(\C_+)$$
are bounded. Indeed, we have the norm estimate
\begin{equation*}
B(p)^{-1}\big\|f+4\imag\Eop\Mop[f]\big]\big\|_{L^p_p(\C_+)}
\le\|\beur^\downarrow[f]\|_{L^p_{p}(\C_+)}
\le B(p)\big\|f+4\imag\Eop\Mop[f]\big]\big\|_{L^p_p(\C_+)}
\end{equation*}
for all $f\in L^p_p(\C_+)$. Here, $B(p)$ denotes the norm of
the Beurling transform on $L^p(\C)$, and the constants are optimal on both 
sides of the estimate. 
\label{thm-main}
\end{thm}


\begin{proof}
As we saw in the previous section, it is a consequence of the Hardy 
inequality \eqref{eq-hardy-2'} that $\Cop^\downarrow[f]$ and 
$\bar\Cop^\downarrow[f]$ are in $L^p(\C_+)$ provided that 
$f\in L^p_p(\C_+)$.

Next, we shall assume $f\in L^p_p(\C_+)$ is of the form $f=\bDelta F$,
for some $F\in C^\infty_0(\C_+)$. We first claim that the collection of such
$f$ is dense in $L^p_p(\C_+)$. To this end, suppose 
$g\in L^{p'}(\C_+)$ ($p'=p/(p-1)$ is the dual exponent) is such that
$$\int_{\C_+}(\im z)\bDelta F(z)\,\bar g(z)\,\diff A(z)=0.$$
By Green's formula, we get, in the sense of distribution theory,
$$\int_{\C_+}F(z)\,\bDelta((\im z)\bar g(z))\,\diff A(z)=0,$$
for all $F\in C^\infty_0(\C_+)$. It follows that 
$$\bDelta\Mop[g]=0,$$
so that $\Mop[g]\in L^{p'}(\Hplane)$ is harmonic. By Theorem \ref{thm-L},
$g=0$, and the claim follows.

In terms of the function $F$, we have 
$$\Cop^\downarrow[f]=\d F,\qquad \bar\Cop^\downarrow[f]=\dbar F.$$
We now see that the norm estimate of the theorem follows once it has been 
established that
\begin{multline}
B(p)^{-1}\big\|\Mop\bDelta F+\tfrac{\imag}2\,(\d F+\dbar F)\big\|_{L^p(\C_+)}
\\
\le\|\Mop\d^2F\|_{L^p(\C_+)}
\le B(p)\big\|\Mop\bDelta F+\tfrac{\imag}2\,(\d F+\dbar F)\big\|_{L^p(\C_+)}.
\label{eq-1.1.1}
\end{multline}
To this end, we introduce the auxiliary function
$$G=\Mop^2\d\Mop^{-1}[F]\in C^\infty_0(\C_+).$$
We may think of $C^\infty_0(\C_+)$ as a subspace of $C^\infty_0(\C)$ by 
extending the functions to vanish where they were previously undefined.
In particular, it follows from \eqref{eq-beur2} that
\begin{equation}
\frac{1}{B(p)}\|\dbar G\|_{L^p(\C_+)}\le
\|\d G\|_{L^p(\C_+)}\le B(p)\|\dbar G\|_{L^p(\C)_+}.
\label{eq-beur3}
\end{equation}
We first calculate $\d G$, using \eqref{eq-2.2}:
\begin{multline*}
\d G=\d\Mop^2\d\Mop^{-1}[F]=(\Mop^2\d-\imag\Mop)\d\Mop^{-1}[F]=
(\Mop^2\d-\imag\Mop)(\Mop^{-1}\d+\tfrac{\imag}{2}\Mop^{-2})[F]\\
=\Mop^2\d\Mop^{-1}\d F+\tfrac{\imag}{2}\Mop^2\d\Mop^{-2}[F]-\imag\d
+\tfrac12\Mop^{-1}[F]\\
=\Mop^2(\tfrac{\imag}{2}\Mop^{-2}+\Mop^{-1}\d)\d F
+\tfrac{\imag}{2}\Mop^2(\imag\Mop^{-3}+\Mop^{-2}\d)F-\imag\d F
+\tfrac12\Mop^{-1}[F]\\
=\tfrac{\imag}{2}\d F+\Mop\d^2 F-\tfrac12\Mop^{-1}[F]+
\tfrac{\imag}{2}\d F-\imag\d F+\tfrac12\Mop^{-1}[F]=\Mop\d^2 F.
\end{multline*}
We next calculate $\dbar G$ in the same manner:
\begin{multline*}
\dbar G=\dbar\Mop^2\d\Mop^{-1}[F]=(\Mop^2\dbar+\imag\Mop)\d\Mop^{-1}[F]=
(\Mop^2\dbar+\imag\Mop)(\Mop^{-1}\d +\tfrac{\imag}{2}\Mop^{-2})[F]\\
=\Mop^2\dbar\Mop^{-1}\d F+\tfrac{\imag}{2}\Mop^2\dbar\Mop^{-2}[F]+\imag\d F
-\tfrac12\Mop^{-1}[F]\\
=\Mop^2(-\tfrac{\imag}{2}\Mop^{-2}+\Mop^{-1}\dbar)\d F
+\tfrac{\imag}{2}\Mop^2(-\imag\Mop^{-3}+\Mop^{-2}\dbar) F+\imag\d F
-\tfrac12\Mop^{-1}[F]\\
=-\tfrac{\imag}{2}\d F+\Mop\bDelta F+\tfrac12\Mop^{-1}[F]+
\tfrac{\imag}{2}\dbar F+\imag\d F
-\tfrac12\Mop^{-1}[F]=\Mop\bDelta F+\tfrac{\imag}{2}(\d F+\dbar F).
\end{multline*}
The claimed estimate \eqref{eq-1.1.1} is now an immediate consequence 
of \eqref{eq-beur3}. The sharpness of the constants is discussed in the last
section. 
Except for that point, the proof is complete.
\end{proof}

\section{Analysis of two operators}

\noindent\bf The operators. \rm 
In the context of Theorem \ref{thm-main}, with $p=2$, 
we would like to study the operators
$$\beur^\downarrow:\,L^2_2(\C_+)\to L^2_2(\C_+)$$
and 
$$\id+4\imag\Eop\Mop:
\,L^2_2(\C_+)\to L^2_2(\C_+)$$
with respect to range and null space ($\id$ is the identity operator).
It is a curious fact that this problem -- for the second operator -- 
is intimately connected with the
classical Whittaker (or Kummer) ordinary differential equation (see, e. g.,
\cite{AS}, \cite{MOS}, or Wolfram MathWorld, Wikipedia). In view of 
Theorem \ref{thm-main}, the null spaces of the two operators coincide,
which is why we only characterize the null space of the second operator.
\medskip
 
\noindent\bf The range of the operator $\beur^\downarrow$. \rm
The range of $\Mop\beur^\downarrow$ is a subspace of $L^2(\C_+)$, and
studying the range of $\Mop\beur^\downarrow$ is equivalent to studing
the range of $\beur^\downarrow$.
Let $h\in L^2_{\!-\!2}(\C_+)$ be such that $\Mop^{-1}[h]\in L^2(\C_+)$ 
is perpendicular to the range of $\Mop\beur^\downarrow$.
From the proof of
Theorem \ref{thm-main}, we see that this is the same as requiring that
$$\big\langle \Mop^{-1}[h],\Mop\d^2 F
\big\rangle_{L^2(\C_+)}=0,\qquad F\in C^{\infty}_0(\C_+).$$
By dualizing we see that this is the same as
$$\big\langle \dbar^2 h,F
\big\rangle_{L^2(\C_+)}=0,\qquad F\in C^{\infty}_0(\C_+),$$
that is,
$$\dbar^2 h=0.$$
This means that $h$ is bi-analytic in $\C_+$, and hence of the form
$h=h_1+\Mop h_2$, where $h_1,h_2$ are analytic in $\C_+$. In particular,
$h$ is biharmonic, and since $h\in  L^2_{\!-\!2}(\C_+)$,  
Theorem \ref{thm-L-bih} gives that $h_1=0$, so that $h=\Mop  h_2$ where
$h_2\in A^2(\C_+)$. We conclude that the $L^2(\C_+)$-closure of the range of 
$\Mop\beur^\downarrow$ equals
$$L^2(\C_+)\ominus A^2(\C_+).$$

\medskip

\noindent\bf The range of the operator $\id+4\imag\Eop\Mop$. \rm
Let $h\in L^2_{\!-\!2}(\C_+)$ be such that $\Mop^{-1}[h]\in L^2(\C_+)$ 
is perpendicular to the range of the above operator. From the proof of
Theorem \ref{thm-main}, we see that this is the same as requiring that
$$\big\langle \Mop^{-1}[h],\Mop\bDelta F+\tfrac{\imag}{2}(\d+\dbar)F
\big\rangle_{L^2(\C_+)}=0,\qquad F\in C^{\infty}_0(\C_+).$$
By dualizing we see that this is the same as
$$\big\langle \bDelta h+\tfrac{\imag}{2}(\d+\dbar)\Mop^{-1}[h],F
\big\rangle_{L^2(\C_+)}=0,\qquad F\in C^{\infty}_0(\C_+),$$
that is,
$$\bDelta h+\frac{\imag}{2}(\d+\dbar)\Mop^{-1}[h]=0.$$
Since
$$\d+\dbar=\frac{\partial}{\partial x},$$
this amounts to the differential equation
\begin{equation}
y\bigg(\frac{\partial^2}{\partial x^2}
+\frac{\partial^2}{\partial y^2}\bigg)h
+2\imag\frac{\partial}{\partial x}h=0.
\label{eq-a.1}
\end{equation}

\begin{lem}
A function $h\in L^2_{\!-\!2}(\C_+)$ solves the partial differential equation
\eqref{eq-a.1} in $\C_+$ if and only if 
$\Mop^{-1}[h]\in\text{\rm conj}(A^2(\C_+))$. 
\label{lem-a1}
\end{lem}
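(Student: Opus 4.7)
My plan is to prove both implications.

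For the easy direction, if $g=\Mop^{-1}[h]$ is conjugate-holomorphic on $\C_+$, write $g(z)=\overline{F(z)}$ with $F\in A^2(\C_+)$, so $h(z)=y\,\overline{F(z)}$ where $y=\im z$. Since $\partial_x\overline{F(z)}=\overline{F'(z)}$ and $\partial_y\overline{F(z)}=-\imag\,\overline{F'(z)}$, a short computation gives $\partial_x h=y\,\overline{F'(z)}$ and $(\partial_x^2+\partial_y^2)h=-2\imag\,\overline{F'(z)}$, so the left-hand side of \eqref{eq-a.1} vanishes identically.

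For the converse, let $g=\Mop^{-1}[h]\in L^2(\C_+)$. Expanding $h=yg$ by the Leibniz rule turns \eqref{eq-a.1} into the equivalent equation
\[
y\,\bDelta g+\imag\,\d g=0\qquad\text{on }\C_+.
\]
Setting $u=\d g$ and using $\bDelta g=\dbar u$ yields $y\,\dbar u+\imag\,u=0$. Multiplying by $y$ and noting $\dbar(y^2)=\imag y$, this becomes $\dbar(y^2u)=0$, so $y^2\,\d g=H(z)$ is holomorphic in $\C_+$, and it suffices to prove $H\equiv 0$.

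To establish this, I would exploit the translation-invariance of the PDE in the real variable $x$ by taking Fourier transforms in $x$. Writing $\phi_\xi(y)=\wh g(\xi,y)$, the equation reduces to a one-parameter family of ODEs
\[
y\phi_\xi''+2\phi_\xi'-\xi(\xi y+2)\phi_\xi=0,\qquad 0<y<+\infty,
\]
which, after the substitution $t=2|\xi|y$, is a Kummer (confluent hypergeometric) equation---this is the Whittaker connection foreshadowed in the preceding text. The hypothesis $g\in L^2(\C_+)$ forces $\phi_\xi\in L^2(0,+\infty)$ for almost every $\xi$. Near $y=0$ the ODE is regular singular with indicial exponents $0$ and $-1$, yielding a bounded solution and one behaving like $y^{-1}$; near $y=+\infty$ the leading behaviors are $e^{\pm\xi y}$. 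For $\xi>0$, the explicit solution $\phi=e^{\xi y}$ is unbounded at $+\infty$, and reduction of order (with integrand $e^{-2\xi s}/s^2$) gives a second solution behaving like $y^{-1}$ near $0$, so no nonzero combination is square-integrable. For $\xi=0$ the two solutions are constants and $y^{-1}$, neither in $L^2$. For $\xi<0$ the $L^2$ solutions are precisely $\phi_\xi(y)=A(\xi)e^{\xi y}$. Consequently $\wh g(\xi,y)$ is supported in $\{\xi<0\}$ and of the form $A(\xi)e^{\xi y}$ there, which is exactly the Paley--Wiener description of $\text{conj}(A^2(\C_+))$, giving $g\in\text{conj}(A^2(\C_+))$ as claimed.

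The main obstacle is the ODE analysis in the case $\xi>0$: one must show that no linear combination of the explicit exponential solution and the reduction-of-order solution can be square-integrable on $(0,+\infty)$, requiring simultaneous asymptotic control at both singular endpoints---this is exactly the Kummer/Whittaker asymptotic content to which the author alludes, and where the argument is most delicate.
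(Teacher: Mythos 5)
Your argument is correct and follows essentially the same route as the paper: a partial Fourier transform in $x$ reduces \eqref{eq-a.1} to a one-parameter family of Whittaker-type ODEs, whose square-integrable solutions are shown to be $A(\xi)\e^{\xi y}$ supported on $\{\xi<0\}$, which is precisely the Paley--Wiener description of $\text{conj}(A^2(\C_+))$. The only (harmless) differences are that you work with $\wh{g}=\wh{h}/y$ rather than with $\wh{h}$ itself, and that you analyze the ODE via the explicit solution $\e^{\xi y}$ plus reduction of order instead of quoting the Whittaker solution formulas --- which, if anything, makes the endpoint asymptotics more transparent.
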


\begin{proof}
Let
$${\widehat h}(\xi,y)=\int_{-\infty}^{+\infty}\e^{-\imag x\xi}h(x,y)\,
\diff x$$
denote the partial Fourier transform with respect to the $x$ variable.
An application of the partial Fourier transform to the differential equation 
\eqref{eq-a.1} yields
\begin{equation*}
y\bigg(-\xi^2+\frac{\partial^2}{\partial y^2}
\bigg){\widehat h}(\xi,y)-2\xi{\widehat h}(\xi,y)=0,
\end{equation*}
that is,
\begin{equation}
\frac{\partial^2}{\partial y^2}{\widehat h}(\xi,y)
-\bigg(\xi^2+\frac{2\xi}{y}\bigg){\widehat h}(\xi,y)=0.
\label{eq-a.2}
\end{equation}
Next, we put
$$H(\xi,t)={\widehat h}\bigg(\xi,\frac{t}{2|\xi|}\bigg),$$
and see that \eqref{eq-a.2} becomes
\begin{equation}
\frac{\partial^2}{\partial t^2}H(\xi,t)-
\bigg(\frac14+\frac{\sgn(\xi)}{t}\bigg)H(\xi,t)=0.
\label{eq-a.3}
\end{equation}
The requirement that $h\in L^2_{\!-\!2}(\C_+)$ amounts to
\begin{equation}
\int_0^{+\infty}\int_{-\infty}^{+\infty}|H(\xi,t)|^2\,
\frac{|\xi|\diff\xi\diff t}{t^2}<+\infty.
\label{eq-a.4}
\end{equation}
The differential equation \eqref{eq-a.3} is of Whittaker type.
It is well-known that the general solution to the ordinary differential 
equation
$$\frac{d^2}{\partial t^2}X(t)-
\bigg(\frac14+\frac{1}{t}\bigg)X(t)=0$$
is of the form 
$$X(t)=A_1t\e^{t/2}+B_1\,t\e^{-t/2}
\int_0^{+\infty}\e^{-t\theta}\frac{\theta}{1+\theta}\,\diff\theta,$$
where $A_1,B_1$ are constants, while the general solution to the 
ordinary differential equation
$$\frac{d^2}{\partial t^2}Y(t)-
\bigg(\frac14-\frac{1}{t}\bigg)Y(t)=0$$
is of the form 
$$Y(t)=A_2\e^{-t/2}\bigg(1-t\log t-t\int_0^t
\frac{\e^\theta-1-\theta}{\theta^2}\diff\theta\bigg)
+B_2\,t\e^{-t/2},$$
where $A_2,B_2$ are constants. It follows that $H(\xi,t)$ must have form
$$H(\xi,t)=A_1(\xi)t\e^{t/2}+B_1(\xi)\,t\e^{-t/2}
\int_0^{+\infty}\e^{-t\theta}\frac{\theta}{1+\theta}\,\diff\theta,\qquad
\xi>0,$$
and 
$$H(\xi,t)=A_2(\xi)\e^{-t/2}\bigg(1-t\log t-t\int_0^t
\frac{\e^\theta-1-\theta}{\theta^2}\diff\theta\bigg)
+B_2(\xi)\,t\e^{-t/2},\qquad \xi<0.$$
A careful analysis of the behavior of these solutions as $t\to0^+$ and
$t\to+\infty$ shows that \eqref{eq-a.4} is impossible unless $A_1(\xi)=
B_1(\xi)=A_2(\xi)=0$, in which case 
$$H(\xi,t)=0,\qquad \xi>0,$$
and 
$$H(\xi,t)=B_2(\xi)\,t\e^{-t/2},\qquad \xi<0.$$
The function $B_2(\xi)$ must then satisfy
$$\int_{-\infty}^{0}|\xi|\,|B_2(\xi)|^2\diff\xi<+\infty,$$
and the partial Fourier transform $\widehat h$ takes the form
$$\widehat h(\xi,y)=2|\xi|yB_2(\xi)\,\e^{y\xi}\,1_{]-\infty,0]}(\xi).$$
This form of $\widehat h$ is equivalent to the assertion that
$\Mop^{-1}[h]\in\text{conj}(A^2(\C_+))$. 
\end{proof}

We now obtain the closure of the range of the operator. 

\begin{prop}
The closure of the range of
$$\Mop+4\imag\Mop\Eop\Mop:
\,L^2_2(\C_+)\to L^2(\C_+)$$ 
equals $L^2(\C_+)\ominus \text{\rm conj}(A^2(\C_+))$.
\end{prop}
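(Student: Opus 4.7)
The plan is to identify the orthogonal complement of the range in $L^2(\C_+)$ and invoke the Hilbert-space identity that the closure of a range coincides with the orthogonal complement of the kernel of the adjoint. Once this annihilator is shown to equal $\text{\rm conj}(A^2(\C_+))$, the claim about the closure of the range follows at once.

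First I would parametrize an arbitrary $u\in L^2(\C_+)$ as $u=\Mop^{-1}[h]$ with $h=\Mop[u]\in L^2(\Hplane)$; this is a bijective change of variables because $\Mop:L^2(\C_+)\to L^2(\Hplane)$ is a surjective isometry. The discussion preceding Lemma \ref{lem-a1} has already unpacked what it means for such a $u$ to annihilate the range: combining the density in $L^2(\Hplane^*)$ of functions of the form $f=\bDelta F$ with $F\in C^\infty_0(\C_+)$, established in the proof of Theorem \ref{thm-main}, with the identities $\Cop^\downarrow[\bDelta F]=\d F$ and $\bar\Cop^\downarrow[\bDelta F]=\dbar F$, and then transferring the differential operators onto $\bar h$ in the sense of distributions, one sees that $u$ is orthogonal to the range if and only if $h$ solves the PDE \eqref{eq-a.1} on $\C_+$.

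Next I would apply Lemma \ref{lem-a1}, which identifies the $L^2(\Hplane)$-solutions of \eqref{eq-a.1} as precisely those $h$ for which $u=\Mop^{-1}[h]$ lies in $\text{\rm conj}(A^2(\C_+))$. It then follows that the annihilator of the range is exactly $\text{\rm conj}(A^2(\C_+))$, so the closure of the range equals its orthogonal complement $L^2(\C_+)\ominus\text{\rm conj}(A^2(\C_+))$, as asserted.

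I do not anticipate a serious obstacle, since the hard machinery — the density argument, the distributional duality that converts the orthogonality condition into the PDE, and the Whittaker-ODE analysis through the partial Fourier transform — is already in place. The only care required is the routine bookkeeping under $\Mop$ and $\Mop^{-1}$ between the spaces $L^2(\Hplane^*)$, $L^2(\C_+)$, and $L^2(\Hplane)$.
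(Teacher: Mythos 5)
Your argument is correct and follows the paper's own route exactly: reduce orthogonality to the range to the distributional PDE \eqref{eq-a.1} via the density of $\{\bDelta F: F\in C^\infty_0(\C_+)\}$ in $L^2(\Hplane^*)$ and the identities $\Cop^\downarrow[\bDelta F]=\d F$, $\bar\Cop^\downarrow[\bDelta F]=\dbar F$, then invoke Lemma \ref{lem-a1} and pass to the orthogonal complement. The only point deserving a second look is the conjugation bookkeeping in the dualization step (whether the PDE lands on $h$ or $\bar h$ determines whether you get $\text{conj}(A^2(\C_+))$ or $A^2(\C_+)$), and your stated conclusion agrees with the paper's.
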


\begin{rem}
One can show that the range of the operator is closed.
\end{rem}
\medskip

\noindent\bf The null space of the operator. 
\rm We turn to the study of the kernel of the operator. So, given
$f\in L^2_2(\C_+)$, we want to know what the solutions to 
\begin{equation}
\Mop[f]+4\imag\Mop\Eop\Mop[f]=0
\label{eq-a.5}
\end{equation}
look like. Let $F\in L^2_{\!-\!2}(\C_+)$ be the associated function
$$F=\bar\Cop^{\uparrow}\Cop^\downarrow[f]
=\Cop^{\uparrow}\bar\Cop^\downarrow[f].$$
Then $\bDelta F=f$, and \eqref{eq-a.5} takes the form
\begin{equation}
\Mop[\bDelta F]+\frac{\imag}{2}[\d F+\dbar F]=0.
\label{eq-a.6}
\end{equation}
By Lemma \ref{lem-a1}, we find that this happens if and only if
$\Mop^{-1}[F]\in \text{conj}(A^2(\C_+))$. From this, we quickly derive
the following characterization of the null space.

\begin{prop}
The null space of the operator 
$$\id+4\imag\Eop\Mop:\,L^2_2(\C_+)\to L^2(\C_+)$$
equals $\text{\rm conj}(A^2_2(\C_+))$. 
\end{prop}
\medskip

\section{Geometric interpretation of the main theorem}

\noindent\bf The hyperbolic plane and differential operators. \rm 
Associated with the half-plane model of the hyperbolic plane $\Hplane$, 
we have the geometrically induced differential operators 
$\d^\uparrow,\dbar^\uparrow$:
$$\d^\uparrow=\Mop\d,\quad \dbar^\uparrow=
\Mop\dbar.$$
After all, the length scale on $\C_+$ should be modified to correspond to 
that of the hyperbolic plane. 
There are also the ``dual'' geometrically induced differential operators
$\d^\downarrow,\dbar^\downarrow$:
$$\d^\downarrow=\Mop^2\d\Mop^{-1},\quad 
\dbar^\downarrow=\Mop^2\dbar\Mop^{-1},$$
with the properties that
$$\langle \d^\uparrow f,g\rangle_{L^2(\C_+)}=
-\langle f,\dbar^\downarrow g\rangle_{L^2(\C_+)},\quad
\langle \dbar^\uparrow f,g\rangle_{L^2(\C_+)}=
-\langle f,\d^\downarrow g\rangle_{L^2(\C_+)},$$
provided at least one of $f,g$ is in the class $C^\infty_0(\C_+)$ of compactly
supported test functions, and the 
other is, say, locally integrable on $\C_+$ (the partial derivatives are 
interpreted in the sense of distribution theory when necessary). 
The hyperbolic Laplacian $\bDelta_\Hplane$ 
is obtained as a combination of two such geometric differential operators:
$$\bDelta_\Hplane=\dbar^\downarrow\d^\uparrow=\d^\downarrow\dbar^\uparrow
=\Mop^2\bDelta.$$
\medskip

\noindent\bf Hyperbolic plane Beurling operators. \rm In analogy with the 
planar Beurling transform $\beur=\d\dbar^{-1}$, we suggest for the hyperbolic
plane $\d^\downarrow(\dbar^\downarrow)^{-1}$ as  the 
``hyperbolic plane Beurling transform''.
Like in the case of the Euclidean plane, there is the matter of the choice of 
$(\dbar^\downarrow)^{-1}$. 
In contrast with the $\dbar$-problem in the plane, given a function 
$f\in L^2(\Hplane):=L^2_{\!-\!2}(\C_+)$, there always exists
a solution $u\in L^2(\Hplane)$ with $\dbar^\downarrow u=f$, and
$$\|u\|_{L^2(\Hplane)}\le 4\|f\|_{L^2(\Hplane)}.$$
This follows from the well-known Hardy inequality in a manner which will be 
explained in a later section.
In particular, there always exists a unique solution $u=u_f$ of minimal norm in
$L^2(\Hplane)$. We write $u_f=[\dbar^\downarrow]^{-1}_{\text{min}}f$
for this minimal solution, and have thus defined the operator 
$[\dbar^\downarrow]^{-1}_{\text{min}}$. In a similar manner, we may define
the operator $[\d^\downarrow]^{-1}_{\text{min}}$. 
It is easy to see that 
$$[\dbar^\downarrow]^{-1}_{\text{min}}=\Mop\Cop^\downarrow\Mop^{-2},\quad
[\d^\downarrow]^{-1}_{\text{min}}=\Mop\bar\Cop^\downarrow\Mop^{-2},$$
and hence 
$[\d^\downarrow]^{-1}_{\text{min}},[\dbar^\downarrow]^{-1}_{\text{min}}$ 
act boundedly on $L^p(\Hplane):=L^p_{\!-\!p}(\C_+)$ for $1<p<+\infty$. 
Moreover, since
$$\d^\downarrow[\dbar^\downarrow]^{-1}_{\text{min}}=
(\Mop^2\d\Mop^{-1})(\Mop\Cop^\downarrow\Mop^{-2})=
\Mop^2\d\Cop^\downarrow\Mop^{-2}=\Mop^2\beur^\downarrow\Mop^{-2},$$
we see that the operator $\beur^\downarrow$ is indeed a Beurling-type operator
associated with the hyperbolic plane.
Theorem \ref{thm-main} may be formulated in these geometric-differential 
operator terms.

\begin{thm} $(1<p<+\infty)$
For $p=2$, we have the norm identity
\begin{equation*}
\big\|\d^\downarrow[\dbar^\downarrow]^{-1}_{\text{\rm min}}f
\big\|_{L^2(\Hplane)}=\big\|f+\tfrac{\imag}{2}
\big([\dbar^\downarrow]^{-1}_{\text{\rm min}}f+
[\d^\downarrow]^{-1}_{\text{\rm min}}f\big)\big\|_{L^2(\Hplane)},
\qquad f\in L^2(\Hplane),
\end{equation*}
while for general $p$, we have
\begin{multline*}
\frac{1}{B(p)}\big\|f+\tfrac{\imag}{2}
\big([\dbar^\downarrow]^{-1}_{\text{\rm min}}f+
[\d^\downarrow]^{-1}_{\text{\rm min}}f\big)\big\|_{L^p(\Hplane)}\le
\big\|\d^\downarrow[\dbar^\downarrow]^{-1}_{\text{\rm min}}f
\big\|_{L^p(\Hplane)}\\
\le B(p)\big\|f+\tfrac{\imag}{2}
\big([\dbar^\downarrow]^{-1}_{\text{\rm min}}f+
[\d^\downarrow]^{-1}_{\text{\rm min}}f\big)\big\|_{L^p(\Hplane)},
\qquad f\in L^p(\Hplane). 
\end{multline*}
The constants are sharp.
\label{thm-main:2}
\end{thm}

\medskip

\noindent\bf Explanation of the sharpness of the constants. \rm
Let introduce, for real $\alpha\ge0$, the half-plane
$$\C_+^\alpha=\big\{z\in\C:\,\im z>-\alpha\big\},$$
supplied with the metric and area measure 
$$\diff s_{\alpha}(z)=\frac{(1+\alpha)|\diff z|}{\alpha+\im z},\quad
\diff A_\alpha(z)=\frac{(1+\alpha)^2\diff A(z)}{(\alpha+\im z)^2},$$
and write 
$$\Hplane^\alpha=\langle\C_+^\alpha,\diff s_{\alpha}\rangle$$
for this model of the hyperbolic plane. For $\alpha=0$ we get the standard
model of the hyperbolic plane, while as $\alpha\to+\infty$ the model flattens
out to give the Euclidean plane in the limit.
We consider the associated multiplication operator
$$\Mop_\alpha[f](z)=\frac{\im z+\alpha}{1+\alpha}\,f(z),\qquad
z\in\C_+^\alpha,$$
and the differential operators
$$\d^\uparrow_\alpha=\Mop_\alpha\d,\quad \dbar^\uparrow=
\Mop_\alpha\dbar,\quad\d^\downarrow=\Mop^2_\alpha\d\Mop^{-1}_\alpha,\quad 
\dbar^\downarrow=\Mop^2_\alpha\dbar\Mop^{-1}_\alpha.$$
The estimate of Theorem \ref{thm-main:2} for general $1<p<+\infty$ now
takes the form (with obvious notation) 
\begin{multline*}
\frac{1}{B(p)}\big\|f+\tfrac{\imag}{2(1+\alpha)}
\big([\dbar^\downarrow_\alpha]^{-1}_{\text{\rm min}}f+
[\d^\downarrow_\alpha]^{-1}_{\text{\rm min}}f\big)
\big\|_{L^p(\Hplane^\alpha)}\le
\big\|\d^\downarrow_\alpha[\dbar^\downarrow_\alpha]^{-1}_{\text{\rm min}}f
\big\|_{L^p(\Hplane^\alpha)}\\
\le B(p)\big\|f+\tfrac{\imag}{2(1+\alpha)}
\big([\dbar^\downarrow_\alpha]^{-1}_{\text{\rm min}}f+
[\d^\downarrow_\alpha]^{-1}_{\text{\rm min}}f\big)\big\|_{L^p(\Hplane^\alpha)},
\qquad f\in L^p(\Hplane^\alpha). 
\end{multline*}
As $\alpha\to+\infty$, the geometry becomes Euclidean, and the estimate becomes
\begin{equation*}
\frac{1}{B(p)}\|f\|_{L^p(\C)}\le
\big\|\d\dbar^{-1}f
\big\|_{L^p(\C)}
\le B(p)\|f\|_{L^p(\C)},
\qquad f\in L^p(\C), 
\end{equation*}
which we recognize as \eqref{eq-beur1}. For this reason, the constants
cannot be improved.
\medskip

\noindent\bf Acknowledgements. \rm The author thanks Ioannis Parissis for
extensive discussions, and Michael Benedicks for his interest in this work.


\end{document}